\topmargin -1.5cm \textwidth 6in \textheight 8.5in
\documentclass[12pt]{amsart}
\usepackage{amssymb}
\usepackage{}
\usepackage{cases}

\usepackage{amsmath}
\usepackage{txfonts}
\theoremstyle{plain}
\newtheorem{Thm}{Theorem}

\newtheorem{Lem}[Thm]{Lemma}

\errorcontextlines=0

\begin{document} 
\title[fractional Laplacian]
{On nonlocal nonlinear elliptic problem with the fractional Laplacian}

\author{Li Ma}

\address{Ma: Department of mathematics \\
Henan Normal university \\
Xinxiang, 453007 \\
China} \email{lma@tsinghua.edu.cn}

\dedicatory{}
\date{May 26th, 2014}

\begin{abstract}

In this paper, we study a nonlocal elliptic  problem with the fractional Laplacian on $R^n$. We show that the problem
has infinite positive solutions in $C^\tau(R^n)\bigcap H^\alpha_{loc}(R^n)$. Moreover each of these solutions tends to some positive constant limit at infinity.

{\textbf{Mathematics Subject Classification} (2000): 35J60,
53C21, 58J05}

{\textbf{Keywords}: fractional Laplacian,}
\end{abstract}

\thanks{$^*$ The research is partially supported by the National Natural Science
Foundation of China (No. 11271111) and SRFDP 20090002110019. }
\maketitle

\section{Introduction}

We prove the following result.

\begin{Thm}\label{main} Assume $0<\alpha<n$ and $p>1$. Let $\omega :R_+\to R_+$ be the monotone non-increasing function such that
\begin{equation}\label{cond}
\int_1^{\infty}\frac{\omega(r)}{r}dr=A<\infty¡£
\end{equation}
Then there is a positive constant $\theta$ such that for smooth functions $k(x)$ and $K(x)$ with $|K(x)|\leq \theta \omega(|x|)(1+|x|)^{-\tau}$ and
$0\leq k(x)\leq \theta \omega(|x|)(1+|x|)^{-\tau}$ on $R^n$ for some $\tau\geq \alpha$,
the problem 
\begin{equation}\label{nonlocal}
(-\Delta)^{\alpha/2}u+k(x)u=K(x)u^p,  \ \ \  in \ \ R^n
\end{equation}
has infinite positive solutions in $C^\tau(R^n)\bigcap H^\alpha_{loc}(R^n)$. Moreover, each of these solutions tends to some positive constant limit at infinity.
\end{Thm}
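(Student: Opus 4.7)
The plan is to reduce the PDE to an integral equation using the Riesz potential and then construct solutions near a positive constant via a contraction mapping. Since $(-\Delta)^{\alpha/2}$ annihilates constants and $I_\alpha(x) = c_{n,\alpha}|x|^{\alpha-n}$ is its inverse on $\mathbb{R}^n$ (for $0<\alpha<n$), I look for solutions of the form $u = c + v$ with $c>0$ a prescribed constant and $v \in C_0(\mathbb{R}^n)$ a small perturbation. Substituting and applying $I_\alpha$ reformulates \eqref{nonlocal} as the fixed-point equation
\begin{equation*}
v(x) = T_c(v)(x) := c_{n,\alpha}\int_{\mathbb{R}^n}\frac{K(y)\bigl(c+v(y)\bigr)^p - k(y)\bigl(c+v(y)\bigr)}{|x-y|^{n-\alpha}}\,dy.
\end{equation*}
Varying $c$ over an interval will eventually yield infinitely many distinct solutions since $u_c(x) \to c$ at infinity.

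Next I would set up the functional-analytic framework. Fix $c>0$ and work in the Banach space $X$ of continuous functions vanishing at infinity with the sup norm, or more precisely its closed ball $B_R = \{v \in X : \|v\|_\infty \leq R\}$ for a suitably small $R = R(c)$. The decisive input is the weighted Riesz potential estimate
\begin{equation*}
\Phi(x) := \int_{\mathbb{R}^n}\frac{\omega(|y|)(1+|y|)^{-\tau}}{|x-y|^{n-\alpha}}\,dy,
\end{equation*}
which I would show is bounded on $\mathbb{R}^n$ and tends to $0$ as $|x|\to\infty$, using the splitting $\mathbb{R}^n = \{|y|\le|x|/2\} \cup \{|x-y|\le|x|/2\} \cup \{|y|>|x|/2,\,|x-y|>|x|/2\}$. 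The hypothesis $\tau\geq\alpha$ makes the near-origin part harmless, while the monotonicity of $\omega$ together with $\int_1^\infty \omega(r)/r\,dr = A < \infty$ provides the decay of $\Phi$ at infinity. From the pointwise bounds on $k$ and $K$ one then gets $|T_c(v)(x)|\leq C\theta\,\Phi(x)\bigl[(c+R)^p + (c+R)\bigr]$ and an analogous Lipschitz bound $\|T_c(v_1)-T_c(v_2)\|_\infty \leq C\theta\,\|\Phi\|_\infty\bigl[p(c+R)^{p-1}+1\bigr]\|v_1-v_2\|_\infty$.

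Choosing $\theta$ small (independent of $c$ in a bounded range), both constants become less than $R$ and less than $1$ respectively, so $T_c$ is a contraction of $B_R$ into itself. The Banach fixed-point theorem produces a unique $v_c \in B_R$, and $v_c(x) \to 0$ as $|x|\to\infty$ because $\Phi$ does. The fixed point satisfies the integral equation, from which one deduces $(-\Delta)^{\alpha/2}u_c + k u_c = K u_c^p$ distributionally. To upgrade regularity, I would apply Hölder estimates for the Riesz potential against the source $K u_c^p - k u_c$ (which inherits the decay $\omega(|x|)(1+|x|)^{-\tau}$) to conclude $u_c \in C^\tau(\mathbb{R}^n)$, and local Sobolev regularity for nonlocal equations to get $u_c \in H^\alpha_{loc}(\mathbb{R}^n)$. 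Positivity of $u_c$ follows by taking $R < c$, and distinct choices of $c$ produce solutions with distinct limits at infinity, giving infinitely many.

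The main obstacle I anticipate is the bounded-plus-vanishing control of $\Phi(x)$ under only the integral condition \eqref{cond}; the three-region splitting must be performed carefully because $\omega$ is only monotone (not of power type), so the usual scaling tricks for Riesz potentials of power weights must be replaced by a more delicate argument exploiting monotonicity to pull $\omega$ outside the integrals on each piece.
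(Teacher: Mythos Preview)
Your proposal is correct but follows a genuinely different route from the paper. The paper uses the Perron method: it constructs explicit sub- and super-solutions $U_a$ and $U^a$ as the constant $a$ minus or plus the Riesz potential of $C\omega(|x|)(1+|x|)^{-\tau}$, checks (using both the upper and the lower potential bounds of its Lemmas~1 and~2) that these bracket $a$ and are sub/super-solutions of (\ref{nonlocal}) once $\theta$ is small, and then invokes the nonlocal maximum principle of Caffarelli--Silvestre to produce a solution with $U_a\le u\le U^a$. You instead rewrite the problem as a fixed-point equation $v=T_c(v)$ in $C_0(\mathbb{R}^n)$ and close by Banach contraction for small $\theta$. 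The analytic core is shared---your three-region splitting for $\Phi$ is exactly the paper's Lemma~1---but the existence mechanism differs. Your route is more self-contained: it needs only the upper bound on $\Phi$, avoids the fractional maximum-principle machinery entirely, and yields local uniqueness for free. The Perron argument, on the other hand, is robust to nonlinearities that are merely continuous rather than Lipschitz in $u$, and it transplants directly from Lin's second-order template \cite{L}, which is the paper's declared strategy.
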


We prove the above result by using the Perron method and similar argument as in Lin's work \cite{L}. The Perron method is based on the maximum principle developed in \cite{CS}. Similar argument had been carried out in \cite{M} for nonlinear subelliptic equations on $R^n$.

We denote by $C$ the uniform constants, which may vary in different inequalities or formulae.

Denote by $B_1(0)$ the unit ball in $R^n$.

We present the potential analysis as in \cite{LN} in section \ref{sect2} and we prove the main result in section \ref{sect3}.

\section{Preliminary}\label{sect2}

Let $\omega:R^n\to R$ be a radially symmetric monotone non-increasing function.
Let $f:R^n\to R$ be a locally Holder continuous function with the decay growth as below:
\begin{equation}\label{f1}
|f(x)|\leq C\omega(|x|)|x|^{-\tau}
\end{equation}
where $C>0$  and $\tau>2\alpha$ are uniform constants.

\begin{Lem}\label{Lem1} Define
$$
w(x)=c_n\int_{R^n} \frac{f(y)}{|x-y|^{n-\alpha}}dy,
$$
where $c_n=[n(n-2)|B_1(0)|]^{-1}$. Then $w(x)$ is well-defined and near $\infty$ we have
\begin{equation}\label{upper}
|w(x)|\leq \left\{
\begin{array}{l l l}
C|x|^{\alpha-n}\omega(|x|) & \quad \mbox{if $\tau>n$}\\
C\omega(|x|)\log |x| & \quad \mbox{if $\tau=n$ }\\
C\omega(|x|)|x|^{\alpha-\tau}  & \quad \mbox{if $\alpha\leq\tau $}\\
\end{array} \right.
\end{equation}
where $c_n=1/n(n-2)|B_1(0)|$.
\end{Lem}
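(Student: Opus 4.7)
The plan is to estimate the convolution $w(x)$ by the standard three-region decomposition of $R^n$ relative to $x$, with $R := |x|$ large. I would set
\[
\Omega_1 = \{|y| \leq R/2\}, \qquad \Omega_2 = \{|x-y| \leq R/2\}, \qquad \Omega_3 = R^n \setminus (\Omega_1 \cup \Omega_2),
\]
and bound $\int_{\Omega_i} |f(y)| |x-y|^{\alpha-n}\,dy$ separately, treating the three stated cases as corresponding to the three different regimes in which the radial integrals converge.

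On $\Omega_1$ the triangle inequality gives $|x-y| \geq R/2$, so the kernel is at most $CR^{\alpha-n}$ and the contribution reduces to bounding $\int_{|y|\leq R/2} |f(y)|\,dy$. After splitting off the unit ball (where $f$ is only locally H\"older but bounded) from the outer annulus (where the decay hypothesis \eqref{f1} applies), this is controlled by a harmless $O(1)$ term plus $C\int_1^{R/2} \omega(r) r^{n-1-\tau}\,dr$. This radial integral is $O(1)$ when $\tau > n$, is $O(A)$ when $\tau = n$ precisely because of hypothesis \eqref{cond}, and grows like $R^{n-\tau}$ when $\alpha \leq \tau < n$ (using monotonicity of $\omega$ to factor out $\omega(R)$ from the outermost piece of the annulus).

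On $\Omega_2$, the constraint $|y| \geq R/2$ together with the monotonicity of $\omega$ yields the uniform bound $|f(y)| \leq C\omega(R/2)R^{-\tau}$, so pairing with the integrable kernel $\int_{B_{R/2}(x)} |x-y|^{\alpha-n}\,dy = CR^\alpha$ produces a contribution of order $\omega(R)R^{\alpha-\tau}$. I would handle $\Omega_3$ by splitting further at $|y| = 2R$: the annular piece $\{R/2 \leq |y| \leq 2R\} \cap \Omega_3$ is estimated just like $\Omega_2$, while on the far-field piece $|y| \geq 2R$ the inequality $|x-y| \geq |y|/2$ reduces matters to the tail integral $\int_{2R}^\infty \omega(r) r^{\alpha-1-\tau}\,dr$, which converges and is at most $C\omega(R)R^{\alpha-\tau}$ since $\omega$ is non-increasing and $\tau > \alpha$.

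Assembling the three contributions and retaining the dominant term in each parameter regime yields the claimed three-case bound, with \eqref{cond} entering essentially only in the critical case $\tau = n$ to produce the logarithmic factor. The main obstacle I foresee is the bookkeeping around the passage from $\omega(R/2)$ to $\omega(R)$ (since without an explicit doubling assumption these are not obviously comparable) and the consistent treatment of the unit ball near the origin, where \eqref{f1} is vacuous and one must use local boundedness of $f$ instead; both are standard but need to be tracked carefully so that the $\omega(|x|)$ prefactor in the final bound is legitimate.
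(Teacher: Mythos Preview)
Your argument is correct and follows the same overall strategy as the paper --- a three-region splitting of $R^n$ followed by separate Riesz-kernel estimates --- but the actual decomposition you choose differs from the paper's. The paper partitions by the distance $|y-x|$ alone, taking $D_1=\{|y-x|\le |x|/2\}$, $D_2=\{|x|/2\le |y-x|\le 2|x|\}$, $D_3=\{|y-x|\ge 2|x|\}$, and then simply refers to Li--Ni \cite{LN} for the resulting bounds; you instead separate the ball $\{|y|\le R/2\}$ around the origin from the ball $\{|x-y|\le R/2\}$ around $x$ and treat the complement by a further dyadic split at $|y|=2R$. Your choice makes the pointwise bound on $|f(y)|$ (which is expressed in terms of $|y|$, not $|x-y|$) immediately usable on each region, at the cost of one extra split; the paper's choice keeps the kernel radial variable uniform but forces the origin into the middle annulus $D_2$, which is exactly where the $\tau\gtrless n$ trichotomy has to be argued. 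Both routes yield the same three-case bound, and your sketch is in fact considerably more explicit than the paper's proof. Your caveat about $\omega(R/2)$ versus $\omega(R)$ is well taken: neither the paper nor \cite{LN} imposes a doubling condition, so strictly speaking the conclusion is $|w(x)|\le C\omega(|x|/2)|x|^{\alpha-\tau}$ etc., which is harmless for the application in Section~\ref{sect3}.
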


\begin{proof}
Clearly we have
$$
|w(x)|\leq C\int_{R^n} \frac{\omega(|x|)}{|x-y|^{n-\alpha}(1+|y|^\tau}dy.
$$
We now divide the the whole space $R^n$ into three parts:
$$
D_1=\{y\in R^n; |y-x|\leq |x|/2\},
$$
$$
D_2=\{y\in R^n; |x|/2\leq |y-x|\leq 2|x|\},
$$
and
$$
D_3=\{y\in R^n; |y-x|\geq 2|x|\}.
$$
Set, for $j=1,2,3$,
$$
I_j=\int_{D_j}\frac{\omega(|x|)}{|x-y|^{n-\alpha}(1+|y|^\tau}dy.
$$
Then
$$
|w(x)|\leq C(I_1+I_2+I_3).
$$
One can show as in \cite{LN} that $w(x)$ has the upper bounds as in (\ref{upper}).
\end{proof}

We now assume that $\omega(x)$ satisfies (\ref{cond}).

\begin{Lem}\label{Lem2} Assume $f\geq 0$ on $R^n$ and $f(x)\geq C\omega(|x|)|x|^{-\tau}$ for some $\tau\geq\alpha$ and $C>0$. The Riesz potential $w(x)$ defined in Lemma 1 has the following lower bounds at $\infty$:
\begin{equation}\label{lower}
w(x)\geq \left\{
\begin{array}{l l l}
C|x|^{\alpha-n}\omega(|x|) & \quad \mbox{if $\tau>n$}\\
C\omega(|x|)\log |x| & \quad \mbox{if $\tau=n$ }\\
C\omega(|x|)|x|^{\alpha-\tau}  & \quad \mbox{if $\alpha\leq\tau n$}\\
\end{array} \right.
\end{equation}

\end{Lem}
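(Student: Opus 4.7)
The plan is to mirror the proof of Lemma \ref{Lem1} but with the inequalities reversed. Using the pointwise lower bound $f(y) \geq C\omega(|y|)|y|^{-\tau}$, we have
\[
w(x) \geq C\int_{R^n}\frac{\omega(|y|)}{|x-y|^{n-\alpha}|y|^\tau}\,dy = C(J_1 + J_2 + J_3),
\]
where $J_j$ is the integral over $D_j$. Because each $J_j \geq 0$, in each case of (\ref{lower}) it is enough to select one region whose contribution already matches the claimed asymptotic order and bound that contribution from below.

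For the generic case $\alpha \leq \tau < n$, I would restrict to $D_2$. There $|y-x|^{\alpha-n} \geq (2|x|)^{\alpha-n}$ and, since $|y| \leq 3|x|$ and $\omega$ is non-increasing, $\omega(|y|)|y|^{-\tau} \geq C\omega(3|x|)(3|x|)^{-\tau}$; multiplying by $|D_2| \sim |x|^n$ produces $C\omega(|x|)|x|^{\alpha-\tau}$ after absorbing dimensional constants and invoking the comparability of $\omega(3|x|)$ with $\omega(|x|)$ discussed below. For $\tau > n$, the same estimate on $D_2$ gives only $|x|^{\alpha-\tau}$, which is weaker than claimed, so I would instead restrict to $D_1$, use the volume identity $\int_{D_1}|x-y|^{\alpha-n}dy = C|x|^\alpha$, and combine it with the pointwise lower bound on $f$ on $D_1$ where $|y| \sim |x|$, producing the desired $C\omega(|x|)|x|^{\alpha-n}$. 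In the logarithmic case $\tau = n$, I would integrate over the annulus $\{1 \leq |y| \leq |x|/2\} \subset D_1$, on which $|x - y| \sim |x|$, and reduce to the radial integral $\int_1^{|x|/2}\omega(r)r^{-1}\,dr$; monotonicity of $\omega$ then yields a lower bound of order $\omega(|x|)\log|x|$.

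The main obstacle will be ensuring that $\omega(c|x|)$, for the constants $c = 2, 3/2, 3$ that appear in $D_1$ and $D_2$, is comparable to $\omega(|x|)$ up to a dimensional constant. This is not automatic from monotonicity alone, but the integrability condition (\ref{cond}) forces the decay of $\omega$ to be controlled on each dyadic scale: a Hardy-type estimate against $\int_r^{2r}\omega(s)s^{-1}ds \leq A$ gives $\omega(2r) \geq c\,\omega(r)$ up to adjustable constants absorbed into $C$. Once this comparability is secured, the remaining polar-coordinate computations run in direct parallel with those of Lemma \ref{Lem1}, as indicated in \cite{LN}.
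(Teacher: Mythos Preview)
Your decomposition matches the paper's, but the paper bounds \emph{all three} cases from below using only the single integral
\[
J:=\int_{D_2}\frac{c_n f(y)}{|x-y|^{n-\alpha}}\,dy,
\]
and then defers the radial computation to \cite{LN}. The point is that $D_2$ contains the ball $\{|y|\le |x|/2\}$, on which $|x-y|^{\alpha-n}\ge (2|x|)^{\alpha-n}$ and, by monotonicity alone, $\omega(|y|)\ge \omega(|x|/2)\ge \omega(|x|)$. Splitting that ball radially already gives the three asymptotic orders in \eqref{lower}. Your case-by-case choice of regions is not needed, and in two places it actually fails.

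First, in the case $\tau>n$ your use of $D_1$ does not produce $|x|^{\alpha-n}$. On $D_1$ one has $|y|\sim|x|$, so $f(y)\ge C\omega(3|x|/2)|x|^{-\tau}$, and combining this with $\int_{D_1}|x-y|^{\alpha-n}\,dy=C|x|^{\alpha}$ yields only $C\omega(|x|)|x|^{\alpha-\tau}$, which for $\tau>n$ is strictly smaller than the claimed $C\omega(|x|)|x|^{\alpha-n}$. The correct contribution of order $|x|^{\alpha-n}$ comes from the part of $D_2$ near the origin, where $\int f(y)\,dy$ is bounded below by a fixed positive constant. Second, your ``Hardy-type'' argument that \eqref{cond} forces $\omega(2r)\ge c\,\omega(r)$ is false: take $\omega(r)=e^{-r}$, which satisfies \eqref{cond} but has $\omega(2r)/\omega(r)=e^{-r}\to 0$. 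The inequality $\int_r^{2r}\omega(s)s^{-1}\,ds\le A$ only gives $\omega(2r)\le A/\log 2$, not a lower bound in terms of $\omega(r)$. This is exactly why the paper's approach through $D_2$ is preferable: restricting to $\{|y|\le |x|/2\}\subset D_2$ gives $\omega(|y|)\ge \omega(|x|)$ directly from monotonicity, with no doubling hypothesis required. (A minor slip: the annulus $\{1\le|y|\le|x|/2\}$ you use in the logarithmic case lies in $D_2$, not $D_1$, since $|y|\le|x|/2$ forces $|x-y|\ge|x|/2$.)
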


\begin{proof} As in the proof above, we have
$$
w(x)=(\int_{D_1}+\int_{D_2}+\int_{D_3})\frac{c_n f(y)}{|x-y|^{n-\alpha}}dy\geq \int_{D_2}\frac{c_n f(y)}{|x-y|^{n-\alpha}}dy:=J.
$$
One can show as in \cite{LN} that $J$ has the lower bounds as in (\ref{lower}).

\end{proof}

\section{Proof of Theorem \ref{main}}\label{sect3}
The argument presented below is similar to LIn's work \cite{L}. We just do it briefly.

Take some constants $\theta_1>0$ and $0<a<1$. 
We first define the function $U_a(x)$ on $R^n$ by solving the nonlocal equation
$$
(-\Delta)^{\alpha/2}u=-\frac{C\omega(x)}{(1+|x|)^{\tau}}, \ \ on \ \ R^n
$$
for $C\in (0, \theta_1)$ with $0<U_a(x)\leq a$ and 
$U_a(x)\to a$ at infinity. We can verify that $U_a$ is the lower solution to (\ref{nonlocal}).

Then we define the function $U^a(x)$ on $R^n$ by solving the nonlocal equation
$$
(-\Delta)^{\alpha/2}u=\frac{C\omega(x)}{(1+|x|)^{\tau}}, \ \ on \ \ R^n
$$
for the same $C>0$ with $a\leq U^a(x) <1$ and 
$U^a(x)\to a$ at infinity. We can verify that $U^a$ is the upper solution to (\ref{nonlocal}).

Then we can use the Perron method \cite{CS}\cite{L} to get the desired solution $u(x)$ to (\ref{nonlocal}) such that
$U_a(x)\leq u(x)\leq U^a(x)$ on $R^n$.

This then completes the proof of Theorem \ref{main}.

\end{document}